\documentclass[10pt,a4paper,twoside]{amsart}
\usepackage{amsmath,amsfonts,amsthm,amsopn,color,amssymb,enumitem}
\usepackage{palatino}
\usepackage{graphicx}
\usepackage[colorlinks=true]{hyperref}
\hypersetup{urlcolor=blue, citecolor=red, linkcolor=blue}

\newcommand{\e}{\varepsilon}
\newcommand{\C}{\mathcal{C}}
\newcommand{\F}{\mathcal{F}}

\newcommand{\R}{\mathbb{R}}

\newcommand{\dd}{\Gamma}
\newcommand{\RN}{{\mathbb{R}^N}}

\renewcommand{\d }{\delta }

\def\bbm[#1]{\mbox{\boldmath $#1$}}
\newcommand{\beq }{\begin{equation}}
\newcommand{\eeq }{\end{equation}}

\newtheorem{theorem}{Theorem}[section]
\newtheorem{lemma}[theorem]{Lemma}
\newtheorem{example}[theorem]{Example}
\newtheorem{definition}[theorem]{Definition}
\newtheorem{proposition}[theorem]{Proposition}
\newtheorem{remark}[theorem]{Remark}
\newtheorem{corollary}[theorem]{Corollary}

\title[A note on the Poincar{\'e} inequality]
{A note on the uniformity of the constant in the Poincar{\'e}
inequality}

\author[David Ruiz]{David Ruiz$^1$}
\address{$^1$Dpto. An{\'a}lisis Matem{\'a}tico, Facultad de Ciencias, Avda. Fuentenueva s/n, Granada, 18071 Spain.}

\thanks{David Ruiz has been supported by the Spanish Ministry of
Science and Innovation under the Grant MTM2011-26717, and by J.
Andaluc\'{\i}a (FQM 116).}

\email{daruiz@ugr.es}
\date{}

\keywords{Poincar{\'e} inequality, Sobolev embedding.}

\subjclass[2010]{35J20.}

\begin{document}

%

\maketitle

\begin{center} Dedicated to A. Ambrosetti, a guide who definitively changed my life


\end{center}

\begin{abstract}
The classical Poincar{\'e} inequality establishes that for any bounded
regular domain $\Omega\subset \RN$ there exists a constant
$C=C(\Omega)>0$ such that
$$ \int_{\Omega} |u|^2\, dx \leq C \int_{\Omega} |\nabla u|^2\, dx
\ \ \forall u \in H^1(\Omega),\ \int_{\Omega} u(x) \, dx=0.$$ In
this note we show that $C$ can be taken independently of $\Omega$
when $\Omega$ is in a certain class of domains. Our result
generalizes previous results in this direction.
\end{abstract}

\section{Introduction}

This paper is concerned with the following classical result, known
as Poincar{\'e} inequality (or Poincar{\'e}-Friedrichs inequality):

\begin{theorem} \label{poincare} Let $\Omega \subset \RN$ be a bounded domain
satisfying the interior cone condition, and $p>1$. Then, there
exists a constant $C=C(\Omega)>0$ such that
$$ \int_{\Omega} |u|^p\, dx \leq C \int_{\Omega} |\nabla u|^p\, dx \
\forall u \in W^{1,p}(\Omega),\ \int_{\Omega} u(x) \, dx=0.$$

\end{theorem}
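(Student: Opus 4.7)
The plan is to argue by contradiction, using the Rellich--Kondrachov compactness theorem (which is exactly the place where the interior cone condition enters). Assume the inequality fails. Then for every $n \in \N$ there exists $u_n \in W^{1,p}(\O)$ with $\int_{\O} u_n\, dx = 0$ and
\[
\int_{\O} |u_n|^p\, dx \;>\; n \int_{\O} |\n u_n|^p\, dx.
\]
After normalizing so that $\|u_n\|_{L^p(\O)} = 1$, the sequence $\{u_n\}$ is bounded in $W^{1,p}(\O)$ and its gradient satisfies $\|\n u_n\|_{L^p(\O)}^p < 1/n \to 0$.

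Next I would invoke Rellich--Kondrachov: since $\O$ is a bounded domain with the interior cone condition, the embedding $W^{1,p}(\O) \hookrightarrow L^p(\O)$ is compact. Extract a subsequence, still denoted $u_n$, converging strongly in $L^p(\O)$ to some $u$. Combined with $\n u_n \to 0$ in $L^p(\O)^N$, this shows that $\{u_n\}$ is actually Cauchy in $W^{1,p}(\O)$, hence $u_n \to u$ in $W^{1,p}(\O)$ and the weak gradient $\n u$ equals zero almost everywhere.

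Finally, since $\O$ is connected (being a domain), a $W^{1,p}$ function with vanishing weak gradient is constant a.e. on $\O$. The mean-zero condition passes to the limit and forces that constant to be $0$. But the normalization $\|u_n\|_{L^p}=1$ passes to the limit as well, giving $\|u\|_{L^p(\O)} = 1$, a contradiction.

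The main (really, the only) obstacle is the compactness step: the interior cone condition is used to guarantee the compact embedding $W^{1,p}(\O) \hookrightarrow L^p(\O)$, since without some such regularity of $\partial \O$ the embedding need not be compact and the argument breaks down. The remaining steps (normalization, extraction, characterization of functions with zero weak gradient on a connected open set) are standard soft-analytic manipulations.
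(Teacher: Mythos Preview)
Your proposal is correct and follows essentially the same route as the paper: a contradiction argument via Rellich--Kondrachov to produce a limit $u$ with $\nabla u = 0$ a.e., unit $L^p$ norm and zero mean, which is impossible on a connected domain. The paper's version is terser (it writes $u_n \rightharpoonup u$ and passes directly to the three limit identities), while you spell out the strong $W^{1,p}$ convergence, but the substance is identical.
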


The proof is very simple and standard. Assume, reasoning by
contradiction, that $u_n \in W^{1,p}(\Omega)$ is a sequence such
that

$$\int_{\Omega} u_n\, dx=0,\ \int_{\Omega} |u_n|^p\, dx=1,\ \int_{\Omega} |\nabla u_n|^p\, dx
\to 0.$$

By the Rellich-Kondrachov theorem, one obtains that $u_n
\rightharpoonup u$, where:

$$\int_{\Omega} u\, dx=0,\ \int_{\Omega} |u|^p\, dx=1,\ \int_{\Omega} |\nabla u|^p\, dx
= 0.$$ But if $\nabla u =0$ almost everywhere in a domain, then
$u$ must be constant (see Chapter 9 of \cite{brezis}, for
instance). And this yields the desired contradiction.

\medskip As one can observe, two main ingredients come into play in the proof:
first, the compactness of the $L^p$ embedding. For that one needs
the interior cone condition on $\Omega$ and its boundedness.
Second, the connectedness of the domain.

In this paper we try to answer the following question: under which
conditions on $\Omega$ we can assure the existence of $C$
independently of the set $\Omega$? The above discussion suggests
that one should need three kind of assumptions:

\begin{enumerate}
\item a uniform bound on $\Omega$, \item an interior cone
condition (with a fixed cone), \item and, in a certain sense, a
uniform connectedness assumption on $\Omega$. \end{enumerate}

The goal of this paper is to prove that this is indeed the case.

The uniformity of the Poincar{\'e} constant is useful in some free
boundary problems or shape optimization problems, see \cite{6,7}.
Our motivation comes from the derivation of a uniform local
version of the Moser-Trudinger inequality, which is very useful in
the variational study of the mean field equation on compact
surfaces (see \cite{chen-li}, and also \cite{djlw}, \cite{dm}). We
hope to use the results of this paper to give a proof for such
inequality in a forthcoming work.

Before going on with our exposition, let us mention some previous
results. In the literature there are some results concerning the
dependence of the Poincar{\'e} constant on $\Omega$: see for instance
\cite{chenais, meyer}. However, the assumptions required there
involve uniform $C^1$ regularity on $\Omega$ and other conditions.
Here we are interested in a less regular framework. On the
contrary, in \cite{chenais, meyer} more information is given,
related to the solution of the associated Neumann eigenvalue
problem.

In \cite{chakib} the uniformity of the Poincar{\'e} constant is proved
for uniformly Lipschitz domains. Let us point out that Lipschitz
domains must lie (locally) on one side of the boundary. The unit
ball with a segment removed, for instance, is not Lipschitz.
Moreover, domains with interior cusps are not covered, for
instance. In both cases the Poincar{\'e} inequality holds, which makes
one think that the uniform Lipschitz assumption is too
restrictive.

Throughout the paper we shall use the following notation:

$$ \forall \, \e>0,\ \Omega^\e=\{x \in \Omega:\ d(x, \partial
\Omega) >\e \}.$$

We will consider the following hypotheses.

\begin{enumerate}[label=(h\arabic*), ref=(h\arabic*)]
\setcounter{enumi}{0}

\item \label{h1} There exists $R>0$ such that $\Omega \subset
B(0,R)$.

\item \label{h2} There exists a fixed finite cone $\C$ such that
each point $x \in
\partial \Omega$ is the vertex of a cone $\C_x$ congruent to $\C$ and contained in $\Omega$.

\item \label{h3} There exists $\delta_0 >0$ such that for any
$\delta \in (0, \delta_0)$, $\Omega^{\delta}$ is a connected set.
\end{enumerate}

This last condition is a sort of ``uniform connectedness
assumption". We will show in Example \ref{ejemplo} that a
condition in this direction is necessary in our results. Roughly
speaking, condition \ref{h3} avoids the existence of arbitrarily
narrow junctions between two regions in $\Omega$.

The main result of this note is the following.

\begin{theorem} \label{main} For any $p>1$ there exists a constant $C>0$ such that for any
domain $\Omega$ satisfying \ref{h1}-\ref{h3},

$$ \int_{\Omega} |u|^p\, dx \leq C \int_{\Omega} |\nabla u|^p\, dx \
\ \forall u \in W^{1,p}(\Omega),\ \int_{\Omega} u(x) \, dx=0.$$
Here $C$ depends only on $p$, $R, \d_0$ and the cone $\C$ given in
conditions \ref{h1}-\ref{h3}.

\end{theorem}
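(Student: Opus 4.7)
I would argue by contradiction, along the lines of the proof of Theorem~\ref{poincare} but keeping all constants uniform. Suppose the conclusion fails; then there exist a sequence of domains $\Omega_n$ satisfying \ref{h1}--\ref{h3} with the same $R$, $\delta_0$, $\C$, and functions $u_n\in W^{1,p}(\Omega_n)$ with $\int_{\Omega_n}u_n\,dx=0$, $\int_{\Omega_n}|u_n|^p\,dx=1$ and $\int_{\Omega_n}|\nabla u_n|^p\,dx\to 0$. I would split $\Omega_n=\Omega_n^\delta\cup(\Omega_n\setminus\Omega_n^\delta)$ for a parameter $\delta\in(0,\delta_0)$, control each piece separately, and derive a contradiction by sending $n\to\infty$ and then $\delta\to 0$.

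\textbf{Interior Poincar\'e via \ref{h3}.} Every $x\in\Omega^\delta$ is the centre of a ball $B(x,\delta)\subset\Omega$, so $\Omega^\delta$ is $\delta$-thick; it can be covered by a bounded number $M=M(R,\delta)$ of balls $B(x_i,\delta/4)$ with $x_i\in\Omega^\delta$, on each of which the classical Poincar\'e inequality holds with constant $C(\delta,p)$. Because $\Omega^\delta$ is connected by \ref{h3}, the standard chain-of-balls argument (averages on consecutive overlapping balls differ by $\le C(\delta,p)\|\nabla u\|_{L^p}$) yields
$$\int_{\Omega^\delta}|u-\overline{u}_\delta|^p\,dx\le K_1(\delta)\int_{\Omega^\delta}|\nabla u|^p\,dx,\qquad \overline{u}_\delta:=\frac{1}{|\Omega^\delta|}\int_{\Omega^\delta}u\,dx,$$
with $K_1(\delta)$ depending only on $R,\delta,\C,\delta_0,p$.

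\textbf{Boundary-layer estimate via \ref{h2}.} The uniform cone condition gives (i) the volume bound $|\Omega\setminus\Omega^\delta|\le K_0\delta$ for small $\delta$, and (ii) for each $x\in\Omega\setminus\Omega^\delta$, a rectifiable path in $\Omega$ of length $\le L(\C)$ from $x$ to a lifted point $T(x)\in\Omega^\delta$, with $T$ having uniformly bounded Jacobian (the lift is constructed through the cone $\C_{z(x)}$ at an appropriate boundary point close to $x$). The fundamental theorem of calculus along these paths, combined with H\"older and a change of variables, yields
$$\int_{\Omega\setminus\Omega^\delta}|u|^p\,dx \le K_2\int_{T(\Omega\setminus\Omega^\delta)}|u|^p\,dy+K_3\int_\Omega|\nabla u|^p\,dx,$$
where $T(\Omega\setminus\Omega^\delta)\subset\Omega^\delta$ has measure $\le K_0'\delta$ and $K_2,K_3$ depend only on $\C$ and $p$.

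\textbf{Conclusion.} Fix $\delta>0$ small. Since each $\Omega_n$ contains a cone and a definite fraction of that cone lies at depth $\ge\delta$, we have $|\Omega_n^\delta|\ge c(\C)>0$; together with $\|u_n\|_p=1$ and H\"older, this gives $|\overline{u}_{n,\delta}|\le C$. The interior inequality then yields $\|u_n-\overline{u}_{n,\delta}\|_{L^p(\Omega_n^\delta)}\to 0$, so splitting $|u_n|^p\le 2^{p-1}|\overline{u}_{n,\delta}|^p+2^{p-1}|u_n-\overline{u}_{n,\delta}|^p$ and integrating over $T(\Omega_n\setminus\Omega_n^\delta)$ gives $\int_{T(\Omega_n\setminus\Omega_n^\delta)}|u_n|^p\le C\delta+o_n(1)$; the boundary-layer estimate then produces $\|u_n\|_{L^p(\Omega_n\setminus\Omega_n^\delta)}^p\le C\delta+o_n(1)$. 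Hence $\|u_n\|_{L^p(\Omega_n^\delta)}^p\ge 1-C\delta-o_n(1)$, which forces $|\overline{u}_{n,\delta}|\ge c_1(\C,R)>0$ for $\delta$ small and $n$ large. On the other hand the zero-mean condition reads $|\Omega_n^\delta|\overline{u}_{n,\delta}=-\int_{\Omega_n\setminus\Omega_n^\delta}u_n$, so H\"older combined with $|\Omega_n\setminus\Omega_n^\delta|\le K_0\delta$ and $\|u_n\|_p=1$ gives $|\overline{u}_{n,\delta}|\le C_2\delta^{1/p'}$. For $\delta$ small enough this contradicts the lower bound. The main obstacle is the boundary-layer step: constructing the lift $T$ with controlled Jacobian, essentially a geometric packing lemma depending only on the fixed cone $\C$, which is where \ref{h2} is used quantitatively.
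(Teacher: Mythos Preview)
Your overall scheme—a uniform Poincar\'e inequality on the connected $\delta$-interior plus a separate boundary-layer estimate—is exactly the paper's scheme, but the two steps are implemented differently, and the differences matter.

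For the interior, instead of a chain-of-balls argument on $\Omega^\delta$ (whose chain length and overlap geometry require some bookkeeping), the paper fixes once and for all a finite covering of $B(0,R)$ by balls of radius $r<\delta/2$ and lets $A$ be the union of those balls that meet $\Omega^\delta$. Then $\Omega^\delta\subset A\subset\Omega$, $A$ is connected because $\Omega^\delta$ is (by \ref{h3}), and $A$ belongs to the \emph{finite} family $\mathcal A$ of connected unions of balls from this fixed cover. The Poincar\'e constant for $A$ is then simply $\tilde C=\max_{A\in\mathcal A}C_A$, depending only on $r$ (hence on $\varepsilon$); no chaining is needed. The argument is then direct rather than by contradiction: normalising $\|u\|_{L^p(\Omega)}=1$ and $\|\nabla u\|_{L^p(\Omega)}\le 1$, one shows $\int_A|u-u_A|^p\ge 1/2$ for a single small $\varepsilon$ and concludes $\int_\Omega|\nabla u|^p\ge 1/(2\tilde C)$.

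For the boundary layer, your lift $T$ with controlled Jacobian is precisely the step you flag, and it is genuinely delicate: the cone $\C_{z(x)}$ at the nearest boundary point need not contain $x$, its axis direction varies with $x$, and turning this into a map with bounded Jacobian and bounded path multiplicity essentially forces you through Gagliardo's decomposition into finitely many uniformly Lipschitz pieces (Theorem~\ref{teo:ga} and Remark~\ref{rem:ga}), on each of which a vertical push works. The paper sidesteps this entirely by invoking the uniform Sobolev embedding $W^{1,p}(\Omega)\hookrightarrow L^q(\Omega)$ for some fixed $q>p$ (Proposition~\ref{sobolev}), whose constant depends only on $R$ and $\C$, together with property~\ref{q} (Proposition~\ref{prop:Q}) and H\"older:
\[
\int_{\Omega\setminus A}|u|^p\,dx \le \|u\|_{L^q(\Omega)}^{p}\,|\Omega\setminus A|^{(q-p)/q}\le 2C_S^{p}\,\varepsilon^{(q-p)/q}.
\]
In effect the geometric packing lemma you are after is already packaged inside the Sobolev embedding for cone domains; using it as a black box is what makes the paper's proof short. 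Your route can be completed, but only after building that lift via the Lipschitz decomposition, which is more work than the Sobolev-plus-H\"older line above.
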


We can also give some generalizations of this result, that will be
discussed in Section 3. In particular, we can prove the uniform
estimate in the following situation:

\begin{corollary} \label{coro} For any $p>1$, $R>0$ and $r >0$ there exists a constant $C>0$ such that for any
connected set $K \subset B(0,R)$,

$$ \int_{K_r} |u|^p\, dx \leq C \int_{K_r} |\nabla u|^p\, dx \
\ \forall u \in W^{1,p}(K_r),\ \int_{K_r} u(x) \, dx=0.$$ Here
$K_r$ denotes the domain $K_r =\{ x \in \RN:\ d(x, K)<r\}$.
Observe that $C$ is independent of the choice of $K$.

\end{corollary}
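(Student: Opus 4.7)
My plan is to derive Corollary \ref{coro} from Theorem \ref{main} by verifying that $\Omega = K_r$ satisfies \ref{h1}--\ref{h3} with constants depending only on $R$ and $r$.

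Condition \ref{h1} is immediate: $K \subseteq B(0, R)$ forces $K_r \subseteq B(0, R + r)$. For \ref{h2}, fix $x \in \partial K_r$. Continuity of $d(\cdot, K)$ gives $d(x, K) = r$, so there exists $y \in \overline{K}$ with $|x - y| = r$. The cone $\C_x$ with vertex at $x$, axis along $(y - x)/r$, height $r/2$, and half-angle $\pi/4$ is contained in $K_r$: for a cone point $z = x + t e_\parallel + s e_\perp$ with $t \in (0, r/2]$ and $|s| \le t$,
\[
|z - y|^2 = (r - t)^2 + s^2 \le r^2 - 2rt + 2t^2 \le r^2 - rt < r^2,
\]
so $d(z, K) \le |z - y| < r$. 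This cone depends only on $r$, proving \ref{h2} with a fixed reference cone.

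The real work is \ref{h3}, for which I would take $\delta_0 = r/2$. First observe that for every $y \in K$ the ball $B(y, r)$ is contained in $K_r$, so $d(y, \partial K_r) \ge r > \delta$, and hence $K \subseteq (K_r)^\delta$; since $K$ is connected, it lies in a single connected component $U$ of $(K_r)^\delta$, which also contains the open connected set $K_{r-\delta}$. For any $x \in (K_r)^\delta$ with $d(x, K) < r - \delta$, the straight segment from $x$ to a nearest point $\bar x \in \overline{K}$ lies in $K_{r-\delta} \subseteq U$, so $x \in U$. The delicate case is $d(x, K) \ge r - \delta$: the segment to $\bar x$ may cross points with $d(\cdot, \partial K_r) \le \delta$, and one must produce a path in $(K_r)^\delta$ from $x$ to $K_{r-\delta}$ more carefully. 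My plan is to combine the Lipschitz estimate $d(z, \partial K_r) \ge d(x, \partial K_r) - |z - x|$ (which is strong near $x$, using the hypothesis $d(x, \partial K_r) > \delta$) with the triangle estimate $d(z, \partial K_r) \ge r - d(z, K)$ (which is strong near $\bar x$, where $d(\cdot, K)$ is small), and to show that the maximum of these two lower bounds remains above $\delta$ along the segment, yielding a path in $(K_r)^\delta$ joining $x$ to $K_{r-\delta} \subseteq U$. I expect this last step to be the main obstacle of the proof.

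Once \ref{h3} is in hand, Theorem \ref{main} applied to $\Omega = K_r$ gives the corollary with $C = C(p, R + r, r/2, \C)$, hence depending only on $p$, $R$, and $r$. If the direct argument for the delicate case of \ref{h3} proves too restrictive, an alternative route is to deduce the corollary directly by contradiction, following the scheme of Theorem \ref{poincare}: extract a sequence of connected $K_n \subseteq B(0, R)$ and $u_n \in W^{1, p}((K_n)_r)$ violating the inequality, apply Blaschke selection to pass to a Hausdorff limit $\overline{K_n} \to K$ (compact and connected), use the uniform Sobolev extension afforded by the cone condition \ref{h2} to extract a limit $u \in W^{1, p}(K_r)$ with $\int u = 0$, $\|u\|_p = 1$ and $\nabla u \equiv 0$, and contradict connectedness of $K_r$ (which forces $u$ constant and thus $u \equiv 0$).
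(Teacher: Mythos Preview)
Your checks of \ref{h1} and \ref{h2} are fine, but the paper does \emph{not} deduce the corollary from Theorem~\ref{main}; it says explicitly that ``it is not clear to us whether a domain under the conditions of Corollary~\ref{coro} satisfies condition~\ref{h3}''. Your plan for the delicate case of \ref{h3} does not close as written: with $a=d(x,K)$ and $b=d(x,\partial K_r)$, along the segment $[x,\bar x]$ your two lower bounds $b-ta$ and $r-(1-t)a$ meet at the common value $\tfrac{1}{2}(b+r-a)$, and in the regime $a\ge r-\delta$, $b>\delta$ this is only guaranteed to exceed $\delta/2$, not $\delta$. A concrete instance is $K$ an ``L'' formed by two segments meeting at the origin and $x=(s,s)$ with $r-s$ slightly above $\delta/\sqrt2$: one gets $a=s$, $b=(r-s)\sqrt2$, and the common value $(r-s)\,\tfrac{1+\sqrt2}{2}$ drops below $\delta$ whenever $r-s<\tfrac{2}{1+\sqrt2}\,\delta$. (In that particular example the \emph{actual} distance to $\partial K_r$ does stay above $\delta$ along the segment, so the obstruction is in your bounds, not necessarily in \ref{h3} itself; but the argument as proposed does not go through.)

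The paper sidesteps this entirely by invoking the generalized Theorem~\ref{general}. There one does not need $(K_r)^{\delta}$ to be connected; condition \ref{f3} asks only for \emph{some} connected $U\subset (K_r)^{\delta}$ with $|K_r\setminus U|<\e$. The paper takes $U=\overline{K_{r-\delta}}$, which is connected because $K$ is, checks $U\subset (K_r)^{\delta}$ by the triangle inequality, and rewrites
\[
K_r\setminus U=\{x:\ d(x,K)\in(r-\delta,r)\}=\{x:\ d(x,K_{r-\delta})\in(0,\delta)\}.
\]
Since $K_{r-\delta}$ satisfies a cone condition with a cone depending only on $r$, the argument of Proposition~\ref{prop:Q} makes this measure small uniformly in $K$ once $\delta$ is small, giving \ref{f3}. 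This avoids your delicate case altogether. Your Blaschke-selection fallback is a third possible route, but it is less direct and would require care with the convergence of the domains and of the Sobolev extensions; the paper's approach is cleaner.
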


The uniformity of the Poincar{\'e} constant on this type of sets was
the original motivation of our work. We think that this corollary
could be one of the ingredients of a future local version of the
Moser-Trudinger inequality.

\medskip Our results improve that of \cite{chakib} since  a
connected and uniformly Lipschitz domain satisfy our conditions
\ref{h1}-\ref{h3} (see the final Appendix). Moreover, we can cover
many situations in which the domains are not Lipschitz, as
explained above.

Our proof is also quite different from that of \cite{chakib},
which uses different notions of convergence of sets to pass to a
limit in an argument by contradiction. Here we give a direct
proof, based on two main tools. The first one is the following
property:

\begin{enumerate}[label=(Q), ref=(Q)]

\item \label{q} For any $\e >0$, there exists $\delta>0$
independent of $\Omega$ such that $|\Omega \setminus
\Omega^{\delta}| < \e$.

\end{enumerate}

Let us point out that any domain $\Omega$ satisfies that condition
with $\delta$ depending on $\Omega$; this is due to the continuity
from above of the Lebesgue measure. We shall see that conditions
\ref{h1} and \ref{h2} imply the uniformity of the choice of
$\delta$.

The second ingredient of our arguments is the observation on the
uniformity of the constant in the Sobolev embedding under our
hypotheses.

The rest of the paper is organized as follows. In Section 2 we
first give some preliminary results, some of them well-known,
which are useful later in the proof of Theorem \ref{main}. Section
3 is devoted to the discussion of some extensions of our results.
In particular, Corollary \ref{coro} will be proved in Section 3.
In a final Appendix we show that uniformly lipschitz domains,
bounded and connected, satisfy conditions \ref{h1}-\ref{h3}. This
implies that Theorem \ref{main} generalizes the result of
\cite{chakib}.

\section{Main result}

In this section we first give some preliminary results, which will
be of use later. After that, we shall prove Theorem \ref{main}.

Some notation is in order; given $A \subset \RN$ and $r>0$, we
define:

\begin{equation} \label{dist} \begin{array}{r}  A^r=\{x \in \Omega:\ d(x, \partial A)
>r
\}, \\ A_r=\{x \in \RN:\ d(x,  A) <r\}. \end{array} \end{equation}

We also need a notion of uniformity for the Lipschitz regularity
of a domain. The following definition follows closely that of
Chenais \cite{chenais}.

\medskip We first introduce some notation. Given $z\in \RN$, we
denote $z=(\hat{z}, z_N)$ where $\hat{z}=(x_1,\dots x_{N-1}) \in
\R^{N-1}$ and $z_N \in \R$ give the coordinates of $z$ in a given
coordinate system.

\begin{definition} \label{def:lip} We say that $\Omega$ is a uniform Lipschitz domain with constants
$M>0$, $\gamma>0$ (and we denote $\Omega \in Lip(M,\gamma)$ if the
following is satisfied:

For all $x \in \partial \Omega$, there exists a local coordinate
system satisfying what follows. There exists a function
$\varphi_x: U_{\hat{x}}  \to \R$ a Lipschitz function with
Lipschitz constant smaller than $M$ such that:

$$ y \in O_x \cap \Omega \Leftrightarrow y \in O_x \mbox{ and } y_N > \varphi_x(\hat{y}). $$

Here,

$$U_{\hat{x}}= \{ \hat{z} \in \R^{N-1}:\ |z_i - x_i| < \gamma\},$$
$$O_{x}= \{ z \in \R^{N}:\ |z_i - x_i| < \gamma\ \ i=1 \dots N-1,\ |z^N-x^N| < M \gamma (n-1)^{1/2}\}.  $$

\end{definition}

Observe that for uniformly bounded domains, this definition
coincides with the definition of strong local Lipschitz property
of Adams (\cite{adams}, page 66). This property is equivalent to
the condition assumed in \cite{chakib}, see Section 3 of
\cite{chenais}.

\medskip Now let us recall and comment the following result, due to
Gagliardo \cite{gagliardo} (alternatively, see Adams \cite{adams},
Theorem 4.8).

\begin{theorem} \label{teo:ga} Let $\Omega$ a bounded
domain satisfying the cone condition \ref{h2}. Then there exists a
finite collection $\Omega_1, \dots \, \Omega_k$ of open subsets of
$\Omega$ such that:

\begin{enumerate}
\item $\Omega = \cup_{i=1}^k \Omega_i$.

\item $\Omega_i$ is a Lipschitz domain.

\end{enumerate}

\end{theorem}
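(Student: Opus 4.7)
The plan is to localize near each boundary point, producing in the vicinity of each $x_0 \in \partial\Omega$ a Lipschitz subdomain of $\Omega$ that covers a neighborhood of $x_0$ in $\Omega$. By compactness of $\overline{\Omega}$, finitely many such local Lipschitz pieces together with an open set well inside $\Omega$ (which is trivially Lipschitz) will then cover all of $\Omega$, yielding the desired finite decomposition.

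For the construction at a boundary point $x_0$, I would rotate coordinates so that the axis of $\C_{x_0}$ is $e_N$, fix a slightly shrunk cone $\tilde{\C}$ (same vertex at the origin, but with strictly smaller opening angle and height than $\C$), and in a small neighborhood $V_{x_0}$ of $x_0$ define
\[
\Omega_{x_0} := \bigcup_{y \in \overline{\Omega}\cap V_{x_0}} (y + \tilde{\C}).
\]
The boundary $\partial\Omega_{x_0}$ near $x_0$ is then the graph $\{z_N = \varphi(\hat z)\}$ of
\[
\varphi(\hat z) := \inf\{z_N : (\hat z, z_N) \in \overline{\Omega_{x_0}}\},
\]
and the cone condition forces $\varphi$ to be Lipschitz: at every boundary point $(\hat z, \varphi(\hat z))$ a translate of $\tilde{\C}$ sits inside $\Omega_{x_0}$, which directly gives the one-sided bound $\varphi(\hat z') \le \varphi(\hat z) + C|\hat z' - \hat z|$ with $C$ controlled by the opening angle of $\tilde{\C}$, and the reverse bound follows by applying the same estimate at $(\hat z', \varphi(\hat z'))$.

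The main obstacle is that condition \ref{h2} provides no continuity for the axis of $\C_y$ as $y$ varies along $\partial\Omega$: \emph{a priori} the axes at nearby boundary points could point in very different directions, so the distinguished direction $-e_N$ used to define $\varphi$ may not be compatible with $\C_y$ for $y$ close to but distinct from $x_0$. I would resolve this by exploiting the compactness of the set of possible axis directions in $S^{N-1}$: choose finitely many directions $v_1, \ldots, v_m$ and a sufficiently shrunk cone $\tilde{\C}$ such that any rotated copy of $\C$ with axis $v$ contains the rotation of $\tilde{\C}$ with axis $v_j$ whenever $|v - v_j|$ is small. For each $j$, apply the local construction above in the direction $v_j$ to the set $\mathcal{U}_j$ of boundary points whose cone axis lies close to $v_j$; taking a further finite subcover in $\overline{\Omega}$ then yields the finite collection of Lipschitz subdomains whose union is $\Omega$.
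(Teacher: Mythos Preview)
The paper does not give its own proof of this statement: it is quoted as a classical result of Gagliardo, with a reference to Adams, Theorem~4.8, and the only additional content is the subsequent remark that, on inspecting Adams' argument, the number $k$ of pieces and their Lipschitz constants $(M,\gamma)$ can be taken to depend only on $\C$ and $R$. Your sketch is in fact the argument one finds in Adams: cover $S^{N-1}$ by finitely many small caps so that every rotated copy of $\C$ with axis in the $j$-th cap contains a fixed narrower cone $\tilde{\C}_j$ with axis $v_j$; then, for each $j$, the union of the translates $y+\tilde{\C}_j$ over the relevant $y$ lies in $\Omega$ and has boundary given locally by a Lipschitz graph in the direction $v_j$. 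So your approach coincides with the one the paper cites.

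One small point to tighten: the phrase ``an open set well inside $\Omega$ (which is trivially Lipschitz)'' is not literally correct, since a set such as $\Omega^{\delta}$ need not have Lipschitz boundary. The standard remedy --- and what is done in Adams --- is to cover the compact interior part by finitely many open balls contained in $\Omega$ and add those balls to the collection $\{\Omega_i\}$.
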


\begin{remark} \label{rem:ga} By having a look at the proof of the above
result (\cite{adams}, Theorem 4.8), one obtains easily that the
Lipschitz constants of the graphs depend only on the cone $\C$.
Moreover, the number of sets $A_j$ in the proof can be bounded in
terms of $\C$ and $R$ of condition \ref{h1}. This implies the
following uniform version: under conditions \ref{h1} and \ref{h2},
then $\Omega_i \in Lip(M,\gamma)$, and $k$, $M$, $\gamma$ are
positive constants depending only on the cone $\C$ and the
constant $R$ given by conditions \ref{h1} and \ref{h2}.

\end{remark}

In the following proposition we prove property \ref{q}, which will
be essential in our proof of Theorem \ref{main}.

\begin{proposition} \label{prop:Q} Let us fix $R>0$ and a finite cone
$\C$. Then, property \ref{q} holds for any $\Omega$ satisfying
\ref{h1} and \ref{h2} referred to $R$ and $\C$ respectively.
\end{proposition}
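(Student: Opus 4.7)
The plan is to reduce property \ref{q} to a tubular-neighborhood estimate for uniformly Lipschitz boundaries with constants independent of $\Omega$, and then carry out that estimate by hand.

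First, I would invoke Theorem \ref{teo:ga} together with Remark \ref{rem:ga} to write $\Omega=\bigcup_{i=1}^k \Omega_i$, where each $\Omega_i\in Lip(M,\g)$ and the parameters $k,M,\g$ depend only on $R$ and the cone $\C$. Since $\de\Omega\subset\bigcup_{i=1}^k \de\Omega_i$, any point $x\in\Omega\setminus\Omega^{\d}$ lies within distance $\d$ of some $\de\Omega_i$, so
$$ \Omega\setminus\Omega^\d\;\subset\;\bigcup_{i=1}^k(\de\Omega_i)_\d.$$
It therefore suffices to bound $|(\de\Omega_i)_\d|\le C\d$ with $C$ independent of $\Omega$, and sum.

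Second, for a fixed $\Omega_i\in Lip(M,\g)$, I would cover $\de\Omega_i$ by finitely many cylinders $O_{x_j}$ as in Definition \ref{def:lip}. A standard packing argument (select a maximal family of centers $x_j\in\de\Omega_i$ with mutual distance at least $\g/2$; by \ref{h1} these centers lie in $B(0,R)$) bounds the number $N_0$ of cylinders by a constant depending only on $R$, $M$, $\g$. Inside each $O_{x_j}$, $\de\Omega_i$ is the graph of an $M$-Lipschitz function $\vfi_{x_j}$; a one-line computation shows that any $(\hat z,z_N)\in O_{x_j}$ within distance $\d$ of the graph satisfies $|z_N-\vfi_{x_j}(\hat z)|\le (1+M)\d$, so the portion of $(\de\Omega_i)_\d$ inside $O_{x_j}$ has measure at most $2(1+M)(2\g)^{N-1}\d$.

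Summing over the $N_0$ cylinders and over $i=1,\dots,k$ yields $|\Omega\setminus\Omega^\d|\le C_0\d$, with $C_0$ depending only on $R$ and $\C$. Given $\e>0$, the choice $\d<\e/C_0$ then establishes \ref{q}.

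The only step where care is required is the uniformity bookkeeping: one needs to verify that \emph{every} constant appearing ($k$, $M$, $\g$, the covering number $N_0$, and the slab-measure bound) depends solely on $R$ and $\C$, with no hidden dependence on the particular $\Omega$. The first three are provided directly by Remark \ref{rem:ga}; the fourth follows from the packing argument using \ref{h1}; the last is explicit from the Lipschitz graph representation.
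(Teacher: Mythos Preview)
Your argument is correct and follows the same two-step strategy as the paper: reduce to uniformly Lipschitz pieces via Gagliardo's decomposition (Theorem~\ref{teo:ga} and Remark~\ref{rem:ga}), then bound the tubular neighborhood of each Lipschitz boundary by a slab estimate linear in $\delta$. The only differences are cosmetic---the paper treats the Lipschitz case first and phrases the key inclusion as $\Omega\setminus\Omega^\delta\subset\bigcup_i(\Omega_i\setminus\Omega_i^\delta)$ rather than your $\bigcup_i(\partial\Omega_i)_\delta$, and leaves the packing bound on the covering number implicit.
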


\begin{proof} We will prove the proposition in two steps:

\bigskip {\bf Step 1:} The case $\Omega \in Lip(M, \gamma)$.

This is straightforward. Indeed, consider a graph like in
Definition \ref{def:lip}, and take $t>0$ sufficiently small.
Define

$$W=\{ y \in O_x: y_N > \varphi_x(\hat{y}) + t \}$$

It is clear that for any point $(\hat{y}, \varphi_x(\hat{y}) +
t)$, the distance to the boundary of $\Omega$ is at most $t/M$,
where $M$ is the Lipschitz constant of $\varphi_x$. Therefore,

$$ |O_x \setminus \Omega^{t/M}| \leq |O_x \cap \Omega \setminus W| \leq \gamma^{N-1} t.$$

Observe also that the number of graphs needed to cover $\Omega$ is
uniformly bounded. So it suffices to take $t$ sufficiently small,
according to $\e$. Indeed, it follows that the dependence of
$\delta$ on $\e$ is linear.

\bigskip {\bf Step 2:} The general case, with $\Omega $ satisfying
\ref{h1}, \ref{h2}.

By \ref{teo:ga}, $\Omega \subset \cup_{i=1}^k \Omega_i$ where each
$\Omega_i \in Lip (M, \gamma)$. Obviously, $\partial \Omega \subset
\cup_{i=1}^k
\partial \Omega_i$. Then, it is easy to show that:

$$ \Omega \setminus \Omega^{\delta} \subset \cup_{i=1}^k  \Big( \Omega_i \setminus \Omega_i^{\delta} \Big). $$

Then, we can conclude by Step 1. Since $M,\ \gamma,\ k$ depend
only on $\C$ and $R$, we obtain the uniform estimate.

\end{proof}

Another important tool in our proof is the classical Sobolev
embedding. The uniformity of the Sobolev constant is important for
our purposes and not so well-known, so let us state it in detail.
For a proof, see \cite{adams}, Lemmas 5.10 and 5.15, and also
Theorem 8.25 (observe the remark at the end of the proof of this
last theorem).

\begin{proposition} \label{sobolev} Let $\Omega \subset \RN$ satisfy \ref{h1},
\ref{h2}, and $p>1$. Then there exists $C_S>0$ depending only on
$R,\ p,\ \C$ and $N$ such that:

\begin{enumerate}

\item If $N>p$, $\| u \|_{L^{q}(\Omega)} \leq C_S \|u
\|_{W^{1,p}(\Omega)}$, for any $u \in W^{1,p}(\Omega)$, $q\in [p,
\frac{Np}{N-p}]$.

\item If $N=p$, $\| u \|_{L^{q}(\Omega)} \leq C_S \|u
\|_{W^{1,p}(\Omega)}$, for any $u \in W^{1,p}(\Omega)$, $q\in [2,
+\infty)$.

\noindent Even more, there holds: $\| u \|_{L_A(\Omega)} \leq C_S
\|u \|_{W^{1,p}(\Omega)}$, where $\| \cdot \|_{L_A(\Omega)}$ is
the Orlicz norm associated to $A(t)= e^{t^{\frac{N}{N-1}}}-1$.

\item If $N<p$, then any function $u \in  W^{1,p}(\Omega)$ is
equal almost everywhere to a continuous function. Moreover, $\| u
\|_{L^{\infty}(\Omega)} \leq C_S \|u \|_{W^{1,p}(\Omega)}$.

\end{enumerate}

\end{proposition}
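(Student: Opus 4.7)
The plan is to reduce the uniform Sobolev embedding to the uniformly Lipschitz case, where classical extension techniques give constants with explicit geometric dependence. First, by Theorem \ref{teo:ga} and Remark \ref{rem:ga}, write $\Omega = \bigcup_{i=1}^{k} \Omega_i$ with each $\Omega_i \in Lip(M, \gamma)$, where $k$, $M$, $\gamma$ depend only on $\C$ and $R$. Since $\|u\|_{L^q(\Omega)}^q \le \sum_{i=1}^{k} \|u\|_{L^q(\Omega_i)}^q$ and $\|u\|_{W^{1,p}(\Omega_i)} \le \|u\|_{W^{1,p}(\Omega)}$, it suffices to establish each of the three embeddings on a single piece $\Omega_i$ with constants depending only on $M$, $\gamma$, $N$, $p$; the sum over $i$ then costs at most a factor $k^{1/q}$, which is controlled.

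For each $\Omega_i$ I would invoke a Calder\'on-type extension operator $E_i \colon W^{1,p}(\Omega_i) \to W^{1,p}(\RN)$ whose operator norm depends only on $M$, $\gamma$, $N$, $p$. Such an operator is built from a partition of unity subordinate to the local graph patches of $\partial \Omega_i$ described in Definition \ref{def:lip}, combined with a reflection across each graph; the uniform size $\gamma$ of the patches and the uniform bound $\Omega_i \subset B(0,R)$ guarantee that the number of patches and the modulus of the partition of unity are themselves uniform. This is essentially the content of Lemmas 5.10 and 5.15 of \cite{adams}, used in the proof of Theorem 8.25 there: the real point is not existence of an extension but its explicit dependence on $(M,\gamma,N,p)$.

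With the extension in hand, the classical whole-space inequalities close the argument with universal constants. For case (1), Sobolev on $\RN$ gives $\|E_i u\|_{L^{Np/(N-p)}(\RN)} \le C\|\nabla E_i u\|_{L^p(\RN)}$, and the intermediate exponents $q \in [p, Np/(N-p)]$ follow from H\"older's inequality together with $|\Omega_i| \le |\Omega| \le CR^N$. For case (2), Trudinger's inequality applied to $E_i u$ restricted to a ball containing $\Omega_i$ yields the Orlicz bound with $A(t)=e^{t^{N/(N-1)}}-1$, which in turn dominates every $L^q$-norm for $q \in [2,\infty)$. For case (3), Morrey's inequality gives the pointwise representative and the bound $\|E_i u\|_{L^\infty(\RN)} \le C\|E_i u\|_{W^{1,p}(\RN)}$. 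In each case the final constant depends only on $R, p, \C, N$.

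The main obstacle is verifying the uniformity of the Calder\'on extension constant across the class $Lip(M,\gamma)$: one must check that every ingredient---the reflection operator, the cutoff functions, the partition of unity, and the summation over patches---scales only with $(M,\gamma,N,p)$ and with the total number of patches, the latter being bounded by a function of $R/\gamma$ under \ref{h1}. This is implicit in the references cited above but is the crux of why the Sobolev constant does not degenerate as $\Omega$ varies within the admissible class. Once this uniformity is accepted, the proof is then a routine assembly of the above ingredients.
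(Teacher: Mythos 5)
The paper itself offers no proof of Proposition~\ref{sobolev}: it simply points to Adams' book (Lemmas 5.10, 5.15 and Theorem 8.25, together with the remark at the end of that theorem's proof). Adams establishes the Sobolev embedding for cone-condition domains by a direct potential-theoretic argument, bounding $|u(x)|$ by integrals of $u$ and $|\nabla u|$ over the translated cone $\C_x$ and then invoking fractional-integration/Riesz-potential estimates; the cited remark is precisely the observation that all the constants in this chain depend only on $R$, $p$, $N$ and the cone. Your sketch takes a genuinely different route: decompose $\Omega$ into Lipschitz pieces by Gagliardo's theorem (Theorem~\ref{teo:ga} and Remark~\ref{rem:ga}, which the paper establishes for a different purpose, namely Proposition~\ref{prop:Q}), extend each piece to $\RN$ by a Calder{\'o}n-type operator, and pull back the classical whole-space Sobolev, Trudinger and Morrey inequalities. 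The assembly steps you describe are all sound: the subadditivity $\|u\|_{L^q(\Omega)}^q\le\sum_i\|u\|_{L^q(\Omega_i)}^q$, H{\"o}lder for the subcritical range, and (for case 2) a cutoff supported in a ball of radius $O(R)$ before applying Trudinger, together with a convexity argument to recombine the Orlicz norms over the $k$ pieces. What the direct route buys is that it never needs extension and so survives in settings where no bounded extension operator exists; what your route buys is that everything collapses onto the textbook $\RN$ inequalities. The one place you should not leave as ``implicit in the references'': the claim $\|E_i\|\le C(M,\gamma,R,N,p)$ uniformly over $\Omega_i\in Lip(M,\gamma)$ with $\Omega_i\subset B(0,R)$ \emph{is} the content of the proposition in disguise, so in a full proof it must actually be checked---the number of boundary patches of scale $\gamma$, the Lipschitz bound $M$ on each graph, the overlap and derivative bounds of the partition of unity, and the reflection operators must all be tracked; this is a real (if standard) bookkeeping exercise and is what makes the uniformity ``not so well-known,'' as the paper itself remarks.
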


So far, we have considered conditions \ref{h1} and \ref{h2}.
Condition \ref{h3} is less standard and can be considered as a
uniform connectedness condition. In the following example we show
that this condition is indeed necessary for the thesis of the
Theorem \ref{main}.

\begin{example} \label{ejemplo}

In this example we consider $N\geq 3$, and we write the
coordinates of a point in $\RN$ as $x=(\hat{x}, x_N)$, with
$\hat{x} \in \R^{N-1}$ and $x_N \in \R$. Let us define the cone:

$$\C=\{ x \in \RN;\ 0<x_N<1,\ |\hat{x}| < x_N \}.$$

For any $\e>0$, we define the set:

$$ \Omega_\e = \Big(-\e e_N + \C \Big) \cup \Big(\e e_N - \C
\Big),$$ where $e_N=(0, \dots 0,\ 1)$.

Clearly, $\Omega_\e$ is a connected set and the family of sets
$\Omega_\e$ satisfy \ref{h1} and \ref{h2}, but they do not satisfy
\ref{h3}. We shall find a sequence $u_\e \in H^1(\Omega_\e)$ such
that:
$$\int_{\Omega_\e} u_\e \, dx=0, \ \int_{\Omega_\e} u_\e^2 \, dx
\geq c >0,\ \int_{\Omega_\e} |\nabla u_\e|^2 \, dx \to 0.$$

Define $$u_\e(x)=\left \{ \begin{array}{ll} -1 & x_N \leq -\e, \\
\e^{-1} x_N &  |x_N| <\e,\\ 1 & x_N \geq \e. \end{array} \right.$$

Since $u_\e$ is odd with respect to the $x_N$ variable,
$\int_{\Omega_\e} u_\e \, dx=0$. It is also easy to show that:
$$ \int_{\Omega_\e} u_\e^2 \, dx \to 2 |\C| \mbox{ as } \e \to
0.$$

Finally, we compute:

$$ \int_{\Omega_\e} |\nabla u_\e|^2 \, dx = \int_{\Omega_\e \cap \{-\e<x_N<\e\}} \e^{-2} \,
dx \leq 2 \e^{-2} |\C \cap \{0<x_N<2\e\}| = C \e^{N-2}.$$

\end{example}

\bigskip

Now we are in conditions to prove our main result:

\bigskip

\begin{proof}[Proof of Theorem \ref{main}]

Choose $\e>0$ arbitrarily small, to be fixed at the end of the
proof. Take $\delta \in (0, \d_0)$ such that $|\Omega \setminus
\Omega^{\delta}|<\e$, as given by property \ref{q}. Fix also $r
\in (0, \delta/2)$.

Consider a covering $B(0,R)=\cup_{i=1}^M B_i$, where $B_i$ are
open balls of radius $r$. We denote by $\mathcal{A}$ the class of
connected sets formed by unions of such balls; in other words:

$$ \mathcal{A} = \{ A= \cup_{i \in J} B_i,\ J \subset \{1,\dots M\},\
A \mbox{ is connected}\}.$$

Clearly each element of $\mathcal{A}$ satisfies a cone condition
and hence each of them has a Poincar{\'e} constant $C_A>0$, in virtue
of Theorem \ref{poincare}. In other words,

$$ C_A \int_{A} |\nabla u|^p\, dx \geq \int_A |u|^p\, dx \ \ \forall u \in W^{1,p}(A),\ \int_{A} u\, dx=0.  $$

Since $\mathcal{A}$ is finite, take $\tilde{C}$ the maximum of all
those constants. Observe that $\tilde{C}$ depends only on $r$, and
hence on $\e$.

Define:

$$ J=\{i \in \{1, \dots M\}:\ B_i \cap \Omega^{\delta} \neq \emptyset \}, \ \
A=\cup_{i\in J} B_i.$$

Observe that under our assumptions $\Omega \supset A \supset
\Omega^{\delta}$. Moreover, it is clear that $A$ is connected, and
then $A \in \mathcal{A}$.

Take now $u\in W^{1,p}(\Omega)$ with $\int_{\Omega} u \, dx=0$,
$\int_{\Omega} |u|^p\, dx =1$. Our intention is to find a lower
bound for $\int_{\Omega} |\nabla u|^p\, dx$. Let us assume that
$\int_{\Omega} |\nabla u|^p\, dx \leq 1$, otherwise we are done.

We first give an estimate on $\int_A u\, dx$ by using H{\"o}lder
inequality:

\begin{equation} \label{estim0} \left | \int_A u\, dx \right | = \left | \int_{\Omega\setminus A} u\, dx \right
|\leq |\Omega\setminus A|^{\frac{p-1}{p}} \left (\int_{\Omega} |u|^p
\, dx \right )^{1/p} \leq \e^{\frac{p-1}{p}} .\end{equation}

Let us denote by $u_A$ the average of u on $A$. Then,

\begin{equation} \label{uA} |u_A| = |A|^{-1} \left |\int_A u\, dx \right | \leq
\frac{\e^{\frac{p-1}{p}}}{|A|}.\end{equation}

Observe also that $|A| \geq | \Omega^\delta| = |\Omega| - |\Omega
\setminus \Omega^\delta| \geq V- \e$, where $V$ is the volume the
cone $\C$, which serves as a lower bound for the volume of
$\Omega$. We choose $\e$ small enough so that $|u_A|<1$.

Our intention now is to find a lower estimate for the term $\int_A
|u-u_A|^p\, dx$. Observe that the mean value theorem implies that:

\begin{equation} \label{mvt} \Big ||u-u_A|^p - |u|^p \Big| \leq C_p (|u|^{p-1}+1) u_A. \end{equation}

We now use again H{\"o}lder inequality to estimate the term:

\begin{equation} \label{estim1} \int_A
|u|^{p-1}\, dx \leq \left (\int_A |u|^{p}\, dx
\right)^{\frac{p-1}{p}} |A|^{\frac 1 p} \leq |\Omega|^{\frac 1 p}.
\end{equation}

Therefore,

\begin{equation} \label{estim2} \Big | \int_A  (|u-u_A|^p - |u|^p)\, dx \Big| \leq C_p
\frac{\e^{\frac{p-1}{p}}}{V-\e} (|\Omega|^{\frac 1 p}+ |\Omega|).
\end{equation}

It suffices now to estimate $\int_A  |u|^p\, dx$. Let us take any
$q>p$ so that the Sobolev embedding holds, see Proposition
\ref{sobolev}. Since we are assuming that $\int_{\Omega} |\nabla
u|^p\, dx \leq 1$, we have that $\|u\|_{L^q(\Omega)} \leq C_S
2^{1/p}$. By using again H{\"o}lder inequality, we obtain:

\begin{equation} \label{estim3} \int_A |u|^p\, dx = 1 - \int_{\Omega \setminus A} |u|^p\, dx
\geq 1 - \| u \|_{L^q(\Omega)}^p \left | \Omega \setminus A
\right|^{\frac{q-p}{q}} \geq 1- 2 C_S^p \, \e^{\frac{q-p}{q}}.
\end{equation}

By putting together \eqref{estim2} and \eqref{estim3}, we have
that:

$$ \int_A |u-u_A|^p\, dx \geq 1- 2 C_S^p \, \e^{\frac{q-p}{q}} - C_p
\frac{\e^{\frac{p-1}{p}}}{V-\e} (|\Omega|^{\frac 1 p}+
|\Omega|).$$

We only need to chose $\e$ small enough so that the above term is
greater than $1/2$. Therefore,

$$ \int_{\Omega} |\nabla u|^p\, dx \geq \int_{A} |\nabla (u-u_A)|^p\,
dx\geq \frac{1}{2\tilde{C}}.$$

\end{proof}

As a corollary, we obtain the following result.
\begin{corollary} Given $E \subset \Omega$ a measurable set with nonzero measure,
we denote by $u_E$ the average of u in $E$, that is, $u_E =
|E|^{-1} \int_E u\, dx$.

There exists a constant $C>0$ such that for any domain $\Omega$
satisfying \ref{h1}-\ref{h3},
\begin{enumerate}
\item If $N>p$, $$\displaystyle  \int_{\Omega} | u - u_E|^p\, dx
\leq C |E|^{\frac{p-N}{N}} \int_{\Omega} |\nabla u|^p\, dx \ \
\forall u \in W^{1,p}(\Omega), \ E \subset \Omega, \ |E|>0.$$

\item If $N=p$, $$\displaystyle  \int_{\Omega} |u - u_E|^p\, dx
\leq C (\log(1+ |E|^{-1}))^{N-1}  \int_{\Omega} |\nabla u|^p\, dx
\ \ \forall u \in W^{1,p}(\Omega), \ E \subset \Omega, \ |E|>0.$$

\item If $N<p$, $$\displaystyle  \int_{\Omega} |u - u(x_0)|^p\, dx
\leq C  \int_{\Omega} |\nabla u|^p\, dx \ \ \forall u \in
W^{1,p}(\Omega), \ x_0 \in \Omega.$$

\end{enumerate}

Moreover, $C$ depends only on $p$, $R, \e$ and the cone
$\C$ given in conditions \ref{h1}-\ref{h3}.

\end{corollary}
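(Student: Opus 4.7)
The overall strategy is to reduce to Theorem \ref{main} via the triangle inequality. Let $u_\Omega = |\Omega|^{-1}\int_\Omega u\,dx$ denote the mean of $u$ on $\Omega$. Since $\int_\Omega(u-u_\Omega)\,dx=0$, Theorem \ref{main} yields
$$\int_\Omega |u-u_\Omega|^p\,dx \leq C \int_\Omega |\nabla u|^p\,dx.$$
With this in hand, each of the three inequalities will follow once I control the \emph{constant} $|u_\Omega - u_E|^p$ (or $|u_\Omega - u(x_0)|^p$) and integrate, using $(a+b)^p \leq 2^{p-1}(a^p+b^p)$. Note that $|\Omega| \leq |B(0,R)|$ by \ref{h1} and $|\Omega| \geq |\C|$ by \ref{h2}, so the factors of $|\Omega|$ arising from this integration are uniformly controlled.

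For Case 1 ($N>p$), set $v = u - u_\Omega$ and apply Proposition \ref{sobolev} with $q = p^* = Np/(N-p)$, combined with Theorem \ref{main}, to get $\|v\|_{L^{p^*}(\Omega)} \leq C \|\nabla u\|_{L^p(\Omega)}$. H\"older's inequality then gives
$$|u_E - u_\Omega| \leq |E|^{-1}\int_E |v|\,dx \leq |E|^{-1/p^*} \|v\|_{L^{p^*}(\Omega)},$$
whose $p$-th power carries precisely the factor $|E|^{-p/p^*} = |E|^{(p-N)/N}$. Case 3 ($N<p$) is handled identically, using instead the $L^\infty$ bound from Proposition \ref{sobolev}: $|u(x_0) - u_\Omega| \leq \|v\|_{L^\infty(\Omega)} \leq C \|\nabla u\|_{L^p(\Omega)}$ for every $x_0 \in \Omega$.

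The most delicate step, where I expect the principal technical obstacle to lie, is Case 2 ($N=p$). Here I exploit the Trudinger-type conclusion of Proposition \ref{sobolev}: setting $k = \|v\|_{L_A(\Omega)} \leq C\|\nabla u\|_{L^p(\Omega)}$ with $A(t)=e^{t^{N/(N-1)}}-1$, the definition of the Luxemburg norm gives $\int_\Omega A(|v|/k)\,dx\leq 1$, and hence
$$\frac{1}{|E|}\int_E \exp\bigl((|v|/k)^{N/(N-1)}\bigr) \,dx \leq 1+|E|^{-1}.$$
Applying Jensen's inequality to the convex function $\phi(t)=\exp(t^{N/(N-1)})$ then produces
$$\left(\frac{1}{|E|}\int_E |v|/k\,dx\right)^{N/(N-1)} \leq \log(1+|E|^{-1}),$$
so that $|u_E - u_\Omega|^p \leq k^p\bigl(\log(1+|E|^{-1})\bigr)^{p(N-1)/N} = k^p\bigl(\log(1+|E|^{-1})\bigr)^{N-1}$, which is the bound required.

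Assembling the pieces, in each case the additive contribution $\int_\Omega |u-u_\Omega|^p\,dx \leq C\|\nabla u\|_{L^p}^p$ coming from Theorem \ref{main} is harmless: in Case 1 the factor $|E|^{(p-N)/N}$ is bounded below by $|B(0,R)|^{(p-N)/N}$, in Case 2 the factor $(\log(1+|E|^{-1}))^{N-1}$ is bounded below by $(\log(1+|B(0,R)|^{-1}))^{N-1}$, and in Case 3 the bound is already uniform. Thus the first term can be absorbed into the second in every case. The uniformity of all constants in $p$, $R$, $\delta_0$ and $\C$ is inherited from Theorem \ref{main} and Proposition \ref{sobolev}.
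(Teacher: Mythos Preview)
Your proof is correct and follows the same overall architecture as the paper: normalize by subtracting the global mean $u_\Omega$, invoke Theorem~\ref{main} on $v=u-u_\Omega$, control the constant $|u_E-u_\Omega|$ (resp.\ $|u(x_0)-u_\Omega|$) via the appropriate embedding from Proposition~\ref{sobolev}, and finally absorb the Poincar\'e term using the uniform lower bound on the $|E|$-dependent factor. Cases~1 and~3 coincide with the paper's argument essentially line by line.

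The one genuine difference is Case~2 ($N=p$). The paper proceeds through Orlicz duality: it applies the generalized H\"older inequality $\bigl|\int_E v\bigr|\le 2\|v\|_{L_A}\|\chi_E\|_{L_{\tilde A}}$, with $\tilde A$ the Young complement of $A(t)=e^{t^{N/(N-1)}}-1$, and then computes $\|\chi_E\|_{L_{\tilde A}}=1/\tilde A^{-1}(|E|^{-1})\le |E|\bigl(\log(1+|E|^{-1})\bigr)^{(N-1)/N}$ via the inequality $s\le A^{-1}(s)\tilde A^{-1}(s)$. Your route through Jensen's inequality for the convex function $t\mapsto e^{t^{N/(N-1)}}$ reaches the identical bound while bypassing the complementary Young function altogether; it is a slightly more elementary and self-contained alternative, at the cost of being specific to averages over sets rather than general Orlicz pairings.
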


\begin{remark} Compared
with Theorem 1 and Corollary 2 of \cite{chakib}, here the constant
$C$ is independent of the set $E$. The dependence of the estimate
on $E$, when $|E|$ is small, is made explicit here.

\end{remark}

\begin{proof} Obviously, it suffices to prove the estimate for $u \in W^{1,p}(\Omega)$
with $\int_{\Omega} |\nabla u|^p\, dx=1$ and $\int_{\Omega}u\,
dx=0$. By Theorem \ref{main}, there exists $C>0$ (depending only
on $R$, $\C$ and $\d_0$) such that $\int_{\Omega}|u|^p\, dx \leq
C$.

Observe that:

\begin{equation} \label{hola} \int_{\Omega} |u-\lambda|^p\, dx \leq 2^p \left ( \int_{\Omega} |u|^p \, dx + \lambda^p |\Omega|\right ).\end{equation}

Therefore it suffices to estimate $u_E$ in cases 1 and 2, and
$u(x_0)$ in case 3. For that we will use the Sobolev embedding;
observe that $\|u\|_{W^{1,p}} \leq (1+C)^{1/p}$.

\bigskip {\bf Case 1:}  If $N>p$, we have:

$$ \left | \int_E u \, dx \right | \leq \left ( \int_E
|u|^{\frac{Np}{N-p}}\, dx \right )^{\frac{N-p}{Np}} \left ( \int_E
1\, dx \right )^{1+\frac{1}{N} - \frac{1}{p}}  \leq C'
|E|^{1+\frac{1}{N} - \frac{1}{p}},$$

where $C'$ depends on $C$ and $C_S$. But then:

\begin{equation} \label{hola1} |u_E| \leq C' |E|^{\frac{1}{N} - \frac{1}{p}}. \end{equation}

\bigskip {\bf Case 2:} If $N=p$, we need to use Orlicz norms: we refer to
\cite{adams}, Chapter 8. Let us define $A(t)=
e^{t^{\frac{N}{N-1}}}-1$, and $\tilde{A}(t)$ its complement
function (which is not explicit). By using the generalized H{\"o}lder
inequality,

$$\left |\int_E u\, dx \right |= \left |\int_{\Omega} u \chi_E \, dx \right |\leq 2 \| u \|_{L_A}
\| \chi_E \|_{L_{\tilde{A}}} \leq C' \| \chi_E \|_{L_{\tilde{A}}}
.$$

Here $\chi_E$ is the characteristic function of the set $E$. We
now use the definition of the Orlicz norm:

$$ \| \chi_E \|_{L_{\tilde{A}}} = \inf_{k>0} \left \{
\int_{\Omega} \tilde{A} \left( \frac{\chi_E(x)}{k} \right) \, dx
\leq 1 \right \} =  \inf_{k>0} \left \{  \tilde{A} \left(
\frac{1}{k} \right) |E| \leq 1 \right \} $$$$ =
\frac{1}{\tilde{A}^{-1}(|E|^{-1})} \leq
\frac{A^{-1}(|E|^{-1})}{|E|^{-1}}= |E| \left (\log\left
(1+|E|^{-1} \right ) \right)^{\frac{N-1}{N}}.$$

Above we have just used the general inequality $s \leq A^{-1}(s)
\tilde{A}^{-1}(s).$

Therefore,

\begin{equation} \label{hola2} |u_E| \leq C' \left (\log\left
(1+|E|^{-1} \right ) \right)^{\frac{N-1}{N}}. \end{equation}

\bigskip {\bf Case 3:} If $N<p$, then $u$ is continuous. Moreover,

\begin{equation} \label{hola3} u(x_0) \leq \|u\|_{L^{\infty}(\Omega)} \leq C',\end{equation} by the uniformity of the constant on
the Sobolev inequality.

\bigskip Putting together estimates \eqref{hola}, \eqref{hola1},
\eqref{hola2}, \eqref{hola3}, we finish the proof.

\end{proof}
\begin{remark}
One can extend also the previous results to
sets $E$ which are hypersufaces, in the spirit of Theorem 2 of \cite{chakib}. We leave the details to the interested reader.
\end{remark}

\section{Some generalizations}
In this section we discuss some possible extensions of our results
to more general frameworks. First, we will discuss the possible
relaxation of the hypotheses of Theorem \ref{main}. In particular,
we will prove Corollary \ref{coro}. Finally, we will briefly
comment the extension of our results to Riemannian manifolds.

\subsection{Relaxation of conditions \ref{h1}-\ref{h3}}

The cone condition \ref{h1} has been needed in our proof at two
different steps. First, it is needed to guarantee a Sobolev
inequality (see Proposition \ref{sobolev}), together with the
uniformity of the constant. Observe that the proof of Theorem
\ref{main} only needs a Sobolev inequality for some exponent
$q>p$; the limiting exponent is not needed here.

Second, \ref{h1} is used to derive property \ref{q} given in
Proposition \ref{prop:Q}, which otherwise should be somehow
imposed. Indeed, we can introduce it in a generalized version of
condition \ref{h3}.

Our result can be generalized as follows.

\begin{theorem} \label{general} Take $p>1$, and let $\F$ be a family of domains satisfying the following
hypotheses:

\begin{enumerate}[label=(f\arabic*), ref=(f\arabic*)]
\setcounter{enumi}{0}

\item \label{f1} There exists $R>0$ such that $\Omega \subset
B(0,R)$ for any $\Omega \in \F$.

\item \label{f2} There exists $q>p$ and $C_S>0$ such that for any
$\Omega \in \F$, $u \in W^{1,p}(\Omega)$,

$$\| u \|_{L^{q}(\Omega)} \leq C_S \|u \|_{W^{1,p}(\Omega)}.$$

\item \label{f3} For any $\e>0$ there exists $\delta=\delta(\e)
>0$ such that for any $\Omega \in \F$ we can choose a connected set $U \subset \Omega$ such that
$U \subset \Omega^{\delta}$ and $|\Omega \setminus U| <\e$.

\end{enumerate}
%
%
%

Then, there exists $C>0$ such that for any $\Omega \in \F$,

$$ \int_{\Omega} |u|^p\, dx \leq C \int_{\Omega} |\nabla u|^p\, dx \
\forall u \in W^{1,p}(\Omega),\ \int_{\Omega} u(x) \, dx=0.$$

\end{theorem}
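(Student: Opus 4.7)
The plan is to imitate the proof of Theorem \ref{main} step by step, using hypothesis \ref{f3} in place of Proposition \ref{prop:Q} and hypothesis \ref{f2} in place of Proposition \ref{sobolev}. Recall that the cone condition \ref{h2} was used in the original proof only to deduce those two propositions; once both are replaced by direct assumptions, the rest of the argument carries over.

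I would first fix $\e>0$ small, let $\delta=\delta(\e)$ be given by \ref{f3}, and take a connected set $U\subset\Omega^\delta$ with $|\Omega\setminus U|<\e$. With $r\in(0,\delta/2)$ fixed, I would cover $B(0,R)$ by a finite family $\{B_i\}_{i=1}^M$ of open balls of radius $r$ (the number $M$ depending only on $R$ and $r$), and set $A$ equal to the union of those $B_i$ that meet $U$. Any such $B_i$ contains a point of $\Omega^\delta$ and has diameter $2r<\delta$, so $B_i\subset\Omega$; consequently $U\subset A\subset\Omega$ and $|\Omega\setminus A|<\e$. Moreover $A$ is connected, since every $B_i$ in $A$ meets the connected set $U$, and $A$ belongs to the finite class $\mathcal{A}$ of connected unions of balls from the chosen covering. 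Each element of $\mathcal{A}$ satisfies the interior cone condition, so Theorem \ref{poincare} furnishes a uniform Poincar{\'e} constant $\tilde C=\tilde C(r)$ on this class.

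After that, the argument runs as in the proof of Theorem \ref{main}. A uniform lower bound $|\Omega|\ge V>0$ is obtained by testing \ref{f2} against $u\equiv 1$, which yields $|\Omega|^{1/q}\le C_S|\Omega|^{1/p}$ and hence $|\Omega|\ge C_S^{-pq/(q-p)}=:V$. Taking $u$ with zero mean, $\|u\|_p=1$ and $\|\nabla u\|_p\le 1$, H{\"o}lder's inequality on $\Omega\setminus A$ bounds $|u_A|\le\e^{(p-1)/p}/(V-\e)$, while hypothesis \ref{f2} combined with H{\"o}lder yields $\int_A|u|^p\ge 1-2C_S^p\e^{(q-p)/q}$. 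Combining these with the mean value estimate \eqref{estim2} and applying the Poincar{\'e} inequality on $A$ gives $\int_\Omega|\nabla u|^p\ge 1/(2\tilde C)$, provided $\e$ is chosen small enough.

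I do not foresee any serious obstacle; the only conceptual novelty relative to Theorem \ref{main} is that the connectedness of the auxiliary set $A$ now comes directly from the connectedness built into \ref{f3}, rather than from condition \ref{h3} applied to $\Omega^\delta$, and the uniform lower bound on $|\Omega|$ is supplied by \ref{f2} rather than by the volume of the cone.
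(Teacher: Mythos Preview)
Your proposal is correct and follows exactly the approach the paper indicates, namely to repeat the proof of Theorem~\ref{main} verbatim with \ref{f2} in place of Proposition~\ref{sobolev} and the connected set $U$ from \ref{f3} in place of $\Omega^\delta$. Your observation that testing \ref{f2} with $u\equiv 1$ yields the uniform lower bound $|\Omega|\ge C_S^{-pq/(q-p)}$ (replacing the cone volume $V$ used in Theorem~\ref{main}) is precisely the one non-obvious detail the paper leaves to the reader.
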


The proof of Theorem \ref{general} is basically the same as the
proof of Theorem \ref{main}, with the obvious modifications due to
the new hypotheses. The details are left to the reader.

The literature on Sobolev inequalities for domains not satisfying
the cone property is huge. For instance, it is known that the
limiting Sobolev inequality does not hold for domains containing
outward cusps. However, for cusps having power sharpness, a
Sobolev embedding to some $L^q(\Omega)$ space is possible, see
\cite{adamspaper}. It is to be expected that some uniformity on
the assumptions would imply an uniform constant in that
inequality, providing \ref{f2}.

Many other Sobolev embeddings have also been found, and it is not
possible to give here a detailed account of them: see for instance
\cite{besov, mazya} and the references therein. 

On the other hand, condition \ref{f3} puts together condition
\ref{q} and \ref{h3} in a more general fashion. Condition \ref{f3}
does not seem easy to check in general, though. However, as a
particular case, we can prove Corollary \ref{coro}.

It is not clear to us whether a domain under the conditions of
Corollary \ref{coro} satisfies condition \ref{h3}. In any case,
Corollary \ref{coro} is a consequence of Theorem \ref{general}, as
we show below.

\begin{proof}[Proof of Corollary \ref{coro}]
We only need to check that under the conditions of Corollary
\ref{coro}, we can apply Theorem \ref{general}. Let us define:

$$ \F=\{ K_r: K \subset B(0,R) \mbox{ a connected set} \}. $$

We recall \eqref{dist} for the definition of $K_r$. In what
follows we can always consider $K$ to be a compact set; otherwise,
we would argue on its closure. Clearly, any domain in $\F$
satisfies the cone property with a cone depending only on $r$.
Therefore, conditions \ref{f1} and \ref{f2} are satisfied.

Let us check condition \ref{f3}. Given $\delta \in (0,r/2)$ and $K
\subset B(0,R)$ connected, we define:

$$U= \overline{K_{r-\delta}}= \{ x \in \RN:\ d(x, K) \leq r-\delta \}.$$ Clearly, $U$ is a connected set. Observe now
that $\partial K_r = \{x \in \RN:\ d(x,K)=r \}$. The triangular
inequality readily implies that $U \subset K_r^{\delta}$. In order
to prove \ref{f3}, it suffices to estimate the measure of $K_r
\setminus U$ independently of the choice of $K$.

Let us point out that:

$$ K_r \setminus U = \{x \in \RN:\ d(x,K) \in (r-\delta, r) \}= \{x \in \RN:\ d(x,K_{r-\delta}) \in (0, \delta) \}. $$

The first equality holds just by the definition of $U$; let us
justify briefly the second identity. The inclusion $\supset$
follows easily from the triangular inequality. For the inclusion
$\subset$, take $x\in \RN$ and $y \in K$  with $d(x,y)=d(x,K) \in
(r-\delta,r)$. Choose $z$ in the segment $[x,y]$ such that $d(x,z)
\in (0,\delta)$, $d(z,y) < r-\delta$. Then, $z \in K_{r-\delta}$
and also $d(x,K_{r-\delta}) \leq d(x,z) < \delta$. It suffices to
observe that $x \notin U$ to conclude.

Now, observe that $K_{r-\delta}$ satisfies the cone condition with
a fixed cone. Now it suffices to recall the arguments of the proof
of Proposition \ref{prop:Q} to conclude.

\end{proof}

\subsection{Extension to Riemannian manifolds}
Because of our original motivation in the study of local versions
of the Moser-Trudinger inequality, we are interested in a uniform
Poincar{\'e} inequality in compact Riemannian manifolds. This
extension is quite direct, but it deserves a couple of comments.

First of all, we have not been able to find a specific definition
of the interior cone condition for a domain in a manifold. Still,
we think that it must be written elsewhere, but since we cannot
cite a reference we give below a definition which is useful for
our purposes.

\begin{definition} Let $\Sigma$ be a compact manifold. It is clear that we can
consider on $\Sigma$ a finite family of charts $x_i:B(0,1) \to
\Sigma$, $i=1, \dots k,$ such that $\cup_{i=1}^k x_i(B(0,1/2)) =
\Sigma$. We say that $\Omega \subset \Sigma$ satisfies the
interior cone condition if for any $j$ such that
$x_j^{-1}(\partial \Omega) \cap B(0,1/2) \neq \emptyset$,
$x_j^{-1}(\Omega)$ satisfies the interior cone condition at any
point of $x_j^{-1}(\partial \Omega) \cap B(0,1/2)$.

\end{definition}

By using this finite family of charts it is easy to generalize
Proposition \ref{q} to the case of sub-domains of a fixed compact
Riemannian manifold (instead of \ref{h1}). Moreover, also the
uniform Sobolev embedding holds, as can be easily checked passing
to charts. Therefore, the results of Theorem \ref{main}, Theorem
\ref{general} and Corollary \ref{coro} follow with basically the
same proof.

\section{Appendix}

In Theorem \ref{main} we have proved the uniformity of the
Poincar{\'e} constant for domains satisfying certain properties,
namely \ref{h1}, \ref{h2}, \ref{h3}. In this final section we show
that connected uniformly Lipschitz domains (in the sense of
Definition \ref{def:lip}) satisfy \ref{h2} and \ref{h3}. So, our
result extends that of \cite{chakib} to a less regular framework.

Property \ref{h2} being straightforward, it suffices to show
property \ref{h3}. The proof we give here is inspired by min-max
theory and deformation flows. It is possible that there is a more
direct way to prove this result, but in our opinion the proof
itself is interesting in its own right.

Take $\Omega$ be a uniformly Lipschitz domain, according to
definition \ref{def:lip}. Let us define $\dd: \R^N \to \R$ as:

\begin{equation} \label{elena} \dd(x)= \left \{\begin{array}{lr} d(x, \partial \Omega) &
\mbox{ if } x \in \overline{\Omega},\\ -d(x, \partial \Omega) &
\mbox{ if } x \notin \overline{\Omega}, \end{array} \right.
\end{equation}

Observe that with that definition, $\Omega = \dd^{-1}(0,+\infty)$
and $\Omega^{\delta}= \dd^{-1}(\delta, +\infty)$  (for
$\delta>0$). The idea of the proof is to show that those two sets
are homotopically equivalent if $\delta \in (0,\delta_0)$, for
some $\delta_0$ small independent of $\Omega$. In order to prove
that, we will use a deformation argument.

It is important to observe here that $\dd$ is a Lipschitz map, but
not $C^1$, and therefore one cannot use a gradient flow as
usually. However, in 1981, K. C. Chang (\cite{kcchang}) was able
to apply variational methods to Lipschitz maps. For that, he
needed a notion of generalized gradient, due to Clarke, that we
remind below. Moreover, he was able to prove the existence of a
continuous pseudogradient flow for Lipschitz maps. With these two
main tools, he was able to apply min-max techniques to Lipschitz
functionals.

Here we just use those ideas to find an homotopical equivalence
between $\Omega$ and $\Omega^\delta$. For that, we will need to
show that $\dd$ has no critical points in $\Omega \setminus
\Omega^\delta$; it is here that the Lipschitz assumption on
$\Omega$ comes into play.

First, let us recall the notion of generalized gradient of a
Lipschitz map due to Clarke. For more information on those
aspects, see \cite{clarke, clarke2, kcchang}.

\begin{definition} Let $f:\R^N \to \R$ a Lipschitz map. The
generalized directional derivative of $f$ on $x \in \R^N$ along
the direction $v \in \R^N$ is defined as:

$$ f^0(x;v)= \limsup_{y \to x, t \to 0^+} \frac{f(y+tv)-f(y)}{t}.$$

It can be checked that the function $v \mapsto f^0(x;v)$ is
continuous and convex, and that $|f^0(x;v)| \leq L |v|$, where $L$
is the Lipschitz constant for $f$. We define the generalized
gradient of $f$ at $x$, denoted $\partial f(x)$, as the
subdifferential of the map $v \mapsto f^0(x;v)$ at $0$. In other
words,
$$w \in \partial f(x) \Leftrightarrow w\cdot v \leq f^0(x;v) \
\forall v \in \R^N.$$

For any $x$, $\partial f(x)$ is a non-empty compact and convex
subset of $\RN$.

Finally, we say that $x$ is a critical point of $f$ if $0 \in
\partial f(x)$.

\end{definition}

\begin{lemma} Let $\Omega$ be a uniformly lipschitz domain,
with the definition \ref{def:lip}, and $\dd$ the map defined in
\eqref{elena}. Then, there exists $\delta_0>0$ depending only on
the constants $M$, $\gamma$ such that $\dd$ has no critical points
in $\overline{\Omega} \setminus \Omega^{\delta}$.
\end{lemma}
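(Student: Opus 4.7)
The plan is to establish a single uniformly ascending direction for $\dd$ near $\partial\Omega$: at every point $y$ with $\dd(y)\le\delta_0$ I will exhibit a unit vector $e_N$ (the vertical direction of a local chart at a boundary point near $y$) such that $w \cdot e_N \ge 1/\sqrt{1+M^2}$ for every $w \in \partial\dd(y)$. This immediately forces $0 \notin \partial\dd(y)$, so $y$ is not a critical point. The entire difficulty is concentrated in a geometric claim about $\nabla\dd$; the passage from a statement about $\nabla\dd$ to one about $\partial\dd$ is then standard through Clarke's formula.

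Fix $y \in \overline{\Omega}$ with $\dd(y) \le \delta_0$, and choose $x_0 \in \partial\Omega$ with $|y - x_0| = \dd(y)$ (taking $x_0 = y$ if $y \in \partial\Omega$). In the chart $O_{x_0}$ of Definition \ref{def:lip}, $\partial\Omega$ is the graph $z_N = \varphi_{x_0}(\hat z)$ with Lipschitz constant at most $M$. Taking $\delta_0$ to be a small fraction of $\gamma$ (say $\gamma/10$), the triangle inequality $|x_0 - z_{y'}| \le |x_0 - y| + |y - y'| + |y' - z_{y'}|$ ensures that for every $y'$ in a small fixed neighbourhood of $y$ both $y'$ and its nearest boundary point $z_{y'}$ lie well inside the chart.

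The key geometric step is: at every $y' \in \Omega$ near $y$ where $\dd$ is differentiable, $\nabla\dd(y')\cdot e_N \ge 1/\sqrt{1+M^2}$. Setting $r = \dd(y')$ and $z = z_{y'}$, the open ball $B(y', r)$ lies in $\Omega$, and the Lipschitz graph property forces the translated downward cone $z + K$, with $K := \{(\hat w, w_N): w_N \le -M|\hat w|\}$, to lie in $\overline{\Omega^c}$; hence $B(y', r) \cap (z + K) = \emptyset$. Expanding $|y' - z - t\xi|^2 \ge |y' - z|^2$ for $\xi \in K$ and $t \to 0^+$ yields $(y' - z) \cdot \xi \le 0$ for every $\xi \in K$. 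Writing $\nu := \nabla\dd(y') = (y' - z)/r$ and testing with $\xi = (\hat\nu/|\hat\nu|, -M) \in K$ (the case $\hat\nu = 0$ being immediate) gives $|\hat\nu| \le M\nu_N$, and combined with $|\nu|=1$ this forces $\nu_N \ge 1/\sqrt{1+M^2}$. If instead $y' \notin \overline{\Omega}$, the symmetric argument using the upward cone $z - K \subset \overline{\Omega}$ and the ball $B(y', |\dd(y')|) \subset \overline{\Omega^c}$ gives the same estimate for $\nabla\dd(y') = (z - y')/|z - y'|$.

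Finally, Clarke's formula reads $\partial\dd(y) = \mathrm{co}\{\lim_n \nabla\dd(y_n) : y_n \to y,\ y_n \in D\}$, where $D$ is the full-measure differentiability set of $\dd$. The bound $\nabla\dd(y_n)\cdot e_N \ge 1/\sqrt{1+M^2}$ is stable under limits and convex combinations, so every $w \in \partial\dd(y)$ satisfies $w \cdot e_N \ge 1/\sqrt{1+M^2} > 0$, and in particular $0 \notin \partial\dd(y)$. The main obstacle is the ball--cone separation in the geometric step; this is the only place where the quantitative Lipschitz constants $M$ and $\gamma$ enter, and it is where the argument would break down for merely continuous graphs.
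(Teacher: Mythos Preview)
Your proof is correct and rests on the same geometric idea as the paper's: the Lipschitz graph produces a uniform exterior (resp.\ interior) cone at every boundary point, and this cone forces the ``vertical'' component of the generalized gradient of $\Gamma$ to be at least $1/\sqrt{1+M^2}$ throughout a $\delta_0$-neighbourhood of $\partial\Omega$. The only real difference is in how you reach $\partial\Gamma$. The paper works directly with Clarke's generalized directional derivative: it shows, via the inclusion $(z+\mathcal C)_r\subset\Omega$, that $\Gamma^0(x;v)\le -1/\sqrt{1+M^2}$ for a suitable vertical $v$, which immediately rules out $0\in\partial\Gamma(x)$. You instead compute the classical gradient $\nabla\Gamma(y')=(y'-z_{y'})/|y'-z_{y'}|$ at Rademacher points on each side of $\partial\Omega$, verify the vertical bound there from the ball--cone separation, and then pass to $\partial\Gamma(y)$ through Clarke's convex-hull formula $\partial\Gamma(y)=\mathrm{co}\{\lim_n\nabla\Gamma(y_n)\}$. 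Both are standard ways to certify $0\notin\partial\Gamma$; your route makes the constant transparent from the unit-normal formula and cleanly handles both sides of $\partial\Omega$, while the paper's directional-derivative argument avoids invoking differentiability or uniqueness of the nearest boundary point.
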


\begin{proof}

Take $x \in \Omega \setminus \Omega^{\delta}$; by the definition
of Lipschitz regularity, in a neighborhood of $x$, $\partial
\Omega$ is a graph on the last component (with a conveniently
chosen coordinate system). Take $v= (0, \dots 0, 1)$. We will show
that there exists $c>0$ such that $\dd^0(x;v)<-c<0$; this readily
implies that $\dd$ has no critical points in $\overline{\Omega}
\setminus \Omega^{\delta}$.

By making $z= y + t v$, we have that:

$$ \dd^0(x;v)= \limsup_{z \to x, t \to 0^+} \frac{\dd(z)-\dd(z-tv)}{t}.$$

Take $r=\dd(z) \in (0,\delta]$. Clearly, $B(z,r) \subset \Omega$.
Let us define the cone:

$$\C=\{ x \in \RN;\ 0>x_N>-\tau,\ |\hat{x}| < M x_N \}.$$

Here $\tau>0$ is chosen depending on $M$, $\gamma$. By the
Lipschitz property of the domain, $y+\C \subset \Omega$ for any $y
\in B(z,r)$. In other words, $(z+\C)_r \subset \Omega$ (see
\eqref{dist}). An easy geometrical argument implies the following
(see Figure 1):

$$d(z-tv, \partial \Omega) \geq d(z-tv, \partial (z+\C)_r) \geq r+ct \ \ \mbox{ if } t \mbox{ is small.}$$

Here $c$ can be made explicit, $c=\frac{1}{\sqrt{1+M^2}}>0$. This
finishes the proof.

\begin{figure}[h]
\centering \fbox{
\begin{minipage}[c]{80mm}
           \centering
        \resizebox{80mm}{70mm}{\includegraphics{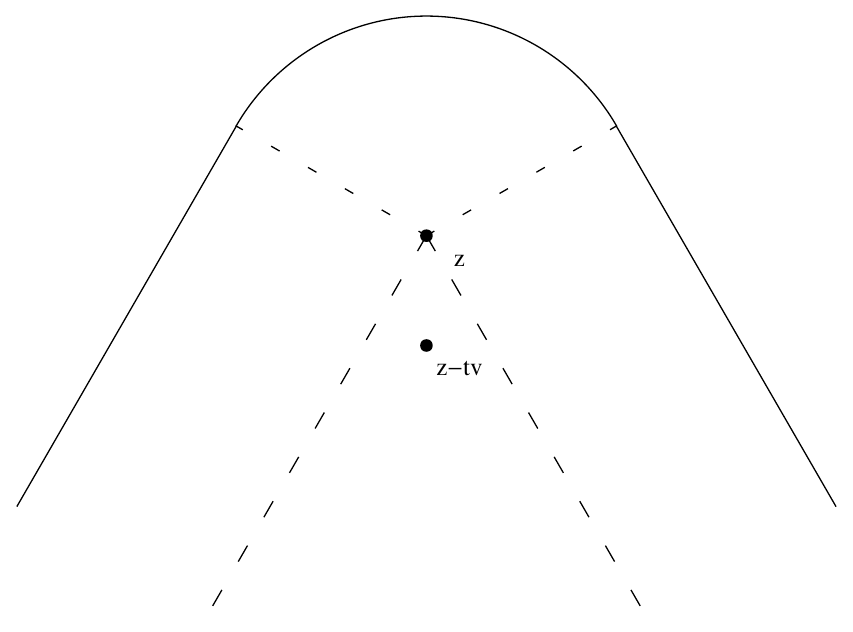}}
\caption{}

         \end{minipage}
       }
\end{figure}

\end{proof}

\begin{proposition} Let $\Omega \subset B(0,R)$ be a uniformly lipschitz domain,
with the definition \ref{def:lip}, and $\dd$ the map defined in
\eqref{elena}. Then, there exists $\delta_0>0$ depending only on
the constants $R$, $M$, $\gamma$ such that $\Omega^\delta$ is
homotopically equivalent to $\Omega$ for all $\delta \in
(0,\delta_0)$. In particular, if $\Omega$ is connected,
$\Omega^{\delta}$ is connected.

\end{proposition}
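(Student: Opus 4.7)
\emph{Strategy.} I would prove that $\Omega^{\delta}$ is a strong deformation retract of $\Omega$; homotopy equivalence, and in particular preservation of connectedness, then follow immediately. The key input is the preceding lemma, which guarantees that the signed distance function $\Gamma$ has no critical points on the compact set $K := \overline{\Omega}\setminus \Omega^{\delta}$ whenever $\delta\in(0,\delta_0)$. The plan is to promote this pointwise non-criticality into a continuous pseudo-gradient flow in the sense of K.\,C.\,Chang \cite{kcchang}, and then to integrate it.

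\emph{Pseudo-gradient.} At each $x\in K$ the lemma produces a unit vector $w_x$ (the direction $e_N$ in the local graph chart around $x$) and a constant $c>0$ depending only on $M$ and $\gamma$, with $\Gamma^{0}(x;\,w_x)\le -c$. Upper semicontinuity of $\Gamma^{0}$ extends this inequality to a neighborhood $U_x$ of $x$. I would extract a finite subcover $\{U_{x_1},\dots,U_{x_m}\}$ of $K$, take a subordinate Lipschitz partition of unity $\{\rho_j\}$, and set $X(y):= -\sum_j \rho_j(y)\,w_{x_j}$. For every $\xi\in\partial \Gamma(y)$ the Clarke inequality $\langle \xi, w_{x_j}\rangle \le \Gamma^{0}(y;w_{x_j})\le -c$ yields $\langle\xi, X(y)\rangle \ge c$. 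Moreover each $-w_{x_j}$ points into $\Omega$, so $X$ points into $\Omega$ along $\partial\Omega$. Finally I would damp $X$ by a Lipschitz cut-off $\chi$ equal to $1$ on $\overline{\Omega}\setminus\Omega^{2\delta/3}$ and $0$ on $\Omega^{\delta}$, obtaining a globally Lipschitz vector field whose orbits freeze once they enter $\Omega^{\delta}$.

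\emph{Flow and retraction.} The ODE $\dot\eta = \chi(\eta)\,X(\eta)$, $\eta(0,x)=x$, then admits a unique global solution remaining in $\overline\Omega$. The Lipschitz chain rule (\cite{clarke, kcchang}) combined with the bound on $\partial\Gamma$ gives $\tfrac{d}{dt}\Gamma(\eta(t,x))\ge c\,\chi(\eta(t,x))$, so there is a uniform time $T$ depending only on $c$ and $\delta_0$ such that $\eta(T,x)\in\Omega^{\delta}$ for every $x\in\Omega$, while $\eta(t,x)\equiv x$ for $x\in\Omega^{\delta}$. The homotopy $H(s,x):=\eta(sT,x)$ then exhibits $r(x):=\eta(T,x)$ as a strong deformation retract of $\Omega$ onto $\Omega^{\delta}$, giving the homotopy equivalence and, via $r(\Omega)=\Omega^{\delta}$, the connectedness conclusion. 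The main technical step is the pseudo-gradient construction: one must verify that the convex combination of the local outward directions delivers both a uniform sign for $\Gamma^{0}$ and a uniform inward component along $\partial\Omega$. Both follow from the Clarke subdifferential calculus and the uniform Lipschitz graph structure, but this is where essentially all of the work concentrates; the rest of the argument is essentially mechanical.
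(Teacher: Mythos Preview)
Your proposal is correct and follows essentially the same route as the paper: the paper simply invokes Lemmas~3.3 and~3.4 of \cite{kcchang} (pseudogradient construction and deformation lemma) in the compact, critical-point-free region $\overline{\Omega}\setminus\Omega^{\delta}$, whereas you spell out precisely that construction via a partition of unity and the Clarke chain rule. The only cosmetic point to watch is the sign/orientation of the local direction $w_x$ versus the chart convention (so that $X$ genuinely points into $\Omega$ along $\partial\Omega$ and $\Gamma$ increases along the flow), but once that is fixed your argument is exactly the content of the cited lemmas.
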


\begin{proof}

The proof is a direct application of the arguments in
\cite{kcchang}. In that paper, Lemma 3.3 gives a Lipschitz
pseudogradient vector field and Lemma 3.4 uses it to define a
convenient deformation, which provides us with the desired
homotopy. Observe that the (PS) condition holds since we are in a
compact framework without critical points.

\end{proof}

\end{document}